\providecommand{\U}[1]{\protect \rule{.1in}{.1in}}
\newtheorem{theorem}{Theorem}[section]
\theoremstyle{remark}
\newtheorem{remark}[theorem]{Remark}
\numberwithin{equation}{section}
\begin{document}
\title[reducible monodromy representation]{On reducible monodromy representations of some generalized Lam\'{e} equation}
\author{Zhijie Chen}
\address{Department of Mathematical Sciences and Yau Mathematical Sciences Center,
Beijing, 100084, China }
\email{zjchen@math.tsinghua.edu.cn}
\author{Ting-Jung Kuo}
\address{Taida Institute for Mathematical Sciences (TIMS), National Taiwan University,
Taipei 10617, Taiwan }
\email{tjkuo1215@gmail.com}
\author{Chang-Shou Lin}
\address{Taida Institute for Mathematical Sciences (TIMS), Center for Advanced Study in
Theoretical Sciences (CASTS), National Taiwan University, Taipei 10617, Taiwan }
\email{cslin@math.ntu.edu.tw}
\author{Kouichi Takemura}
\address{School of Mathematics, University of Leeds, Leeds LS2 9JT, United Kingdom\\
Department of Mathematics, Faculty of Science and Engineering, Chuo
University, 1-13-27 Kasuga, Bunkyo-ku Tokyo 112-8551, Japan}
\email{takemura@math.chuo-u.ac.jp}

\begin{abstract}
In this note, we compute the explicit formula of the monodromy data for a
generalized Lam\'{e} equation when its monodromy is reducible but not
completely reducible. We also solve the corresponding Riemman-Hilbert problem.

\end{abstract}
\maketitle

\section{Introduction}

Throughout the paper, we use the notations $\omega_{0}=0$, $\omega_{1}=1$,
$\omega_{2}=\tau$, $\omega_{3}=1+\tau$ and $\Lambda_{\tau}=\mathbb{Z+Z}\tau$,
where $\tau \in \mathbb{H}=\{ \tau|\operatorname{Im}\tau>0\}$. Define $E_{\tau
}:=\mathbb{C}/\Lambda_{\tau}$ to be a flat torus in the plane and $E_{\tau
}[2]:=\{ \frac{\omega_{k}}{2}|0\leq k\leq3\}+\Lambda_{\tau}$ to be the set
consisting of the lattice points and 2-torsion points in $E_{\tau}$.

Let $\wp(z)=\wp(z|\tau)$ be the Weierstrass elliptic function with periods
$\Lambda_{\tau}$, defined by%
\[
\wp(z|\tau):=\frac{1}{z^{2}}+\sum_{\omega \in \Lambda_{\tau}\backslash
\{0\}}\left(  \frac{1}{(z-\omega)^{2}}-\frac{1}{\omega^{2}}\right)  ,
\]
and $e_{k}=e_{k}(\tau):=\wp(\frac{\omega_{k}}{2}|\tau)$, $k\in \{1,2,3\}$. Let
$\zeta(z)=\zeta(z|\tau):=-\int^{z}\wp(\xi|\tau)d\xi$ be the Weierstrass zeta
function, which is an odd meromorphic function with two quasi-periods:%
\begin{equation}
\eta_{1}(\tau)=\zeta(z+1|\tau)-\zeta(z|\tau),\text{ \ }\eta_{2}(\tau
)=\zeta(z+\tau|\tau)-\zeta(z|\tau). \label{quasi}%
\end{equation}

In this note, we study the following generalized Lam\'{e} equation (GLE):%
\begin{equation}
y^{\prime \prime}(z)=\left[
\begin{array}
[c]{l}%
\sum_{k=0}^{3}n_{k}(n_{k}+1)\wp(z+\tfrac{\omega_{k}}{2})+\tfrac{3}{4}%
(\wp(z+p)\\
+\wp(z-p))+A(\zeta(z+p)-\zeta(z-p))+B
\end{array}
\right]  y(z)\text{ \ in }E_{\tau}, \label{GLE-2}%
\end{equation}
where $n_{k}\in \mathbb{N}\cup \{0\}$ for all $k$, $A,B\in \mathbb{C}$ and $\pm
p\not \in E_{\tau}[2]$ are always assumed to be \emph{apparent singularities}
(i.e. non-logarithmic). Under this assumption, $B$ is determined by $(p,A)$ as
follows (see \cite{Chen-Kuo-Lin}):%
\begin{equation}
B=A^{2}-\zeta(2p)A-\tfrac{3}{4}\wp(2p)-\sum_{k=0}^{3}n_{k}(n_{k}+1)\wp \left(
p+\tfrac{\omega_{k}}{2}\right)  . \label{i60}%
\end{equation}

We are interested in GLE (\ref{GLE-2}), which was studied in
\cite{Takemura,Chen-Kuo-Lin}, because it has a deep relation with the
well-known Panlev\'{e} VI equation. Indeed, if $(A(\tau),B(\tau),p(\tau))$
depends on $\tau$ suitably such that GLE (\ref{GLE-2}) preserves the monodromy
as $\tau$ deforms, then $p(\tau)$ satisfies the elliptic form of Panlev\'{e}
VI equation. See \cite{Chen-Kuo-Lin} or Section 3. Note that by letting
$x=\wp(z)$, GLE (\ref{GLE-2}) can be projected to a new equation on
$\mathbb{CP}^{1}$, which is a second order Fuchsian equation with five
singular points $\{e_{1},e_{2},e_{3},\wp(p),\infty \}$ with $\wp(p)$ being
apparent. Such type of ODEs on $\mathbb{CP}^{1}$ have been widely studied in
the literature. However, the monodromy of this new ODE are not easy to
compute. Therefore, it is more convenient for us to study GLE (\ref{GLE-2}) in
$E_{\tau}$ directly as long as the monodromy is concerned.

The monodromy representation of GLE (\ref{GLE-2}) is a homomorphism $\rho
:\pi_{1}\left(  E_{\tau}\backslash(E_{\tau}[2]\cup(\left \{  \pm p\right \}
+\Lambda_{\tau})),q_{0}\right)  \rightarrow SL(2,\mathbb{C})$, where
$q_{0}\not \in E_{\tau}[2]\cup(\left \{  \pm p\right \}  +\Lambda_{\tau})$ is a
base point. Let $\gamma_{\pm}\in \pi_{1}\left(  E_{\tau}\backslash(E_{\tau
}[2]\cup(\left \{  \pm p\right \}  +\Lambda_{\tau})),q_{0}\right)  $ be a simple
loop encircling $\pm p$ counterclockwise respectively, and $\ell_{j}$,
$j=1,2$, be two fundamental cycles of $E_{\tau}$ connecting $q_{0}$ with
$q_{0}+\omega_{j}$ such that $\ell_{j}$ does not intersect with $L+\Lambda
_{\tau}$ (here $L$ is the straight segment connecting $\pm p$) and satisfies%
\begin{equation}
\gamma_{+}\gamma_{-}=\ell_{1}\ell_{2}\ell_{1}^{-1}\ell_{2}^{-1}\text{ in }%
\pi_{1}\left(  E_{\tau}\backslash(\left \{  \pm p\right \}  +\Lambda_{\tau
}),q_{0}\right)  . \label{II-iv}%
\end{equation}
Since the local exponents of (\ref{GLE-2}) at $\pm p$ are $-\frac{1}{2}$ and
$\frac{3}{2}$ and $\pm p\not \in E_{\tau}[2]$ are apparent singularities, we
always have%
\begin{equation}
\rho(\gamma_{\pm})=-I_{2}. \label{89-2}%
\end{equation}
For any $k\in \{0,1,2,3\}$, the local exponents of GLE (\ref{GLE-2}) at
$\omega_{k}/2$ are $-n_{k}$ and $n_{k}+1$ with $n_{k}\in \mathbb{Z}$. Since the
potential of GLE (\ref{GLE-2}) is even elliptic, the local monodromy matrix of
GLE (\ref{GLE-2}) at $\omega_{k}/2$ is $I_{2}$ (see e.g. \cite[Lemma
2.2]{Takemura}). Therefore, the monodromy group of GLE (\ref{GLE-2}) is
generated by $\{-I_{2},\rho(\ell_{1}),\rho(\ell_{2})\}$. Together with
(\ref{II-iv}) and (\ref{89-2}), we immediately obtain $\rho(\ell_{1})\rho
(\ell_{2})=\rho(\ell_{2})\rho(\ell_{1})$, which implies that the monodromy
group of GLE (\ref{GLE-2}) is always \emph{abelian} and hence \emph{reducible}%
, i.e. \emph{all the monodromy matrices have at least a common eigenfunction}.
Clearly there are two cases:

\begin{itemize}
\item[(a)] Completely reducible, i.e. all the monodromy matrices have two
linearly independent common eigenfunctions. This case has been well studied in
\cite{CKL3}.

\item[(b)] Not completely reducible, i.e. the space of common eigenfunctions
is of dimension $1$: Up to a common conjugation,%
\begin{equation}
\rho(\ell_{1})=\varepsilon_{1}%
\begin{pmatrix}
1 & 0\\
1 & 1
\end{pmatrix}
,\text{ \  \  \ }\rho(\ell_{2})=\varepsilon_{2}%
\begin{pmatrix}
1 & 0\\
C & 1
\end{pmatrix}
, \label{Mono-2}%
\end{equation}
where $\varepsilon_{1},\varepsilon_{2}\in \{ \pm1\}$ and $C\in \mathbb{C}\cup \{
\infty \}$. Remark that if $C=\infty$, then (\ref{Mono-2}) should be understood
as%
\[
\rho(\ell_{1})=\varepsilon_{1}%
\begin{pmatrix}
1 & 0\\
0 & 1
\end{pmatrix}
,\text{ \  \  \ }\rho(\ell_{2})=\varepsilon_{2}%
\begin{pmatrix}
1 & 0\\
1 & 1
\end{pmatrix}
.
\]
In this case, $C$ is called the \emph{monodromy data} of GLE (\ref{GLE-2}).
\end{itemize}

Remark that for the projective ODE of GLE (\ref{GLE-2}) on $\mathbb{CP}^{1}$,
its monodromy representation is irreducible if and only if Case (a) occurs,
and reducible if and only if Case (b) occurs. Most of the references in the
literature are devoted to the case of irreducible representation on
$\mathbb{CP}^{1}$, but very few are devoted to studying reducible representation.

We are interested in \emph{the explicit formula of the monodromy data} $C$
when Case (b) occurs. This problem is important but challenging for general
$n_{k}$, which will be studied in a future paper. In this note, we focus on
the special case $(n_{0},n_{1},n_{2},n_{3})=(1,0,0,0)$, i.e. GLE%
\begin{equation}
y^{\prime \prime}(z)=\left[
\begin{array}
[c]{l}%
2\wp(z)+\frac{3}{4}(\wp(z+p)+\wp(z-p))\\
+A(\zeta(z+p)-\zeta(z-p))+B
\end{array}
\right]  y(z)\text{ \ in }E_{\tau}. \label{89-1}%
\end{equation}

Our first result is following.

\begin{theorem}
\label{thm1}Fix $\tau \in \mathbb{H}$ and $p\not \in E_{\tau}[2]$. If the
monodromy of GLE (\ref{89-1}) is not completely reducible, then the monodromy
data $C$ satisfies either%
\begin{equation}
\wp(p|\tau)=\frac{2g_{3}(C-\tau)^{3}-4(C\eta_{1}-\eta_{2})^{3}-g_{2}(C\eta
_{1}-\eta_{2})(C-\tau)^{2}}{(C-\tau)[12(C\eta_{1}-\eta_{2})^{2}-g_{2}%
(C-\tau)^{2}]}, \label{17-1}%
\end{equation}
with $(\varepsilon_{1},\varepsilon_{2})=(1,1)$ or%
\begin{equation}
\wp(p|\tau)=\frac{(\frac{g_{2}}{2}-3e_{k}^{2})(C\eta_{1}-\eta_{2})+\frac
{g_{2}}{4}e_{k}(C-\tau)}{3e_{k}(C\eta_{1}-\eta_{2})+(\frac{g_{2}}{2}%
-3e_{k}^{2})(C-\tau)}, \label{18-1}%
\end{equation}
for some $k\in \{1,2,3\}$ with%
\begin{equation}
(\varepsilon_{1},\varepsilon_{2})=\left \{
\begin{array}
[c]{l}%
(1,-1)\text{ \ if \ }k=1,\\
(-1,1)\text{ \ if \ }k=2,\\
(-1,-1)\text{ \ if \ }k=3.
\end{array}
\right.  \label{III-19}%
\end{equation}

\end{theorem}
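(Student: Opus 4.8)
The plan is to produce the unique common eigenfunction of the monodromy in closed form and then read off $C$ from a reduction-of-order computation. Since the monodromy is reducible but not completely reducible, the common eigenspace is one-dimensional; let $\phi$ generate it, so $\phi(z+1)=\varepsilon_1\phi(z)$ and $\phi(z+\tau)=\varepsilon_2\phi(z)$. Because the potential of (\ref{89-1}) is even, $z\mapsto\phi(-z)$ is again a common eigenfunction with the same multipliers, so $\phi(-z)=\pm\phi(z)$. Since the only local exponents at the origin are $-1$ and $2$, the even alternative is incompatible with the residue count carried out below, so $\phi$ is odd with a simple pole at $0$. Writing $\phi=\exp(\int^z f)$ turns (\ref{89-1}) into the Riccati equation $f'+f^2=I(z)$, with $f=\phi'/\phi$ an odd elliptic function. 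Matching principal parts against the double poles of $I$ at $0$ and $\pm p$ forces $f$ to have residues $-1$ at $0$ and $-\tfrac12$ at $\pm p$, and since $f$ is elliptic its residues sum to zero, forcing exactly two further simple poles (the zeros of $\phi$) of total residue $+2$.

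First I would treat the generic configuration, in which the two zeros form a symmetric pair $\pm a$ with $a\notin E_\tau[2]$. Then $\phi$ collapses, via $\wp(z)-\wp(w)=-\sigma(z-w)\sigma(z+w)/(\sigma(z)^2\sigma(w)^2)$, to the clean form $\phi(z)=(\wp(z)-\wp(a))/\sqrt{\wp(z)-\wp(p)}$; the numerator is elliptic, so the multipliers will reduce to the branch of $\sqrt{\wp-\wp(p)}$ discussed below. Regularity of $f'+f^2$ at $\pm a$ becomes the single constraint $\wp''(a)(\wp(a)-\wp(p))=\wp'(a)^2$, and solving for $\wp(p)$ gives $\wp(p)=(2\wp(a)^3+\tfrac{g_2}{2}\wp(a)+g_3)/(6\wp(a)^2-\tfrac{g_2}{2})$ (the residue at $\pm p$ merely pins $A$, which will not enter the final relation). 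For the monodromy data, the second solution is $\phi_2(z)=\phi(z)\int^z\phi^{-2}$, and analytic continuation along $\ell_j$ with $\varepsilon_j^2=1$ gives $C=\oint_{\ell_2}\phi^{-2}/\oint_{\ell_1}\phi^{-2}$, where $\phi^{-2}=(\wp-\wp(p))/(\wp-\wp(a))^2$ is genuinely elliptic. Expanding $\phi^{-2}$ in partial fractions and using $\oint_{\ell_j}(\wp-e)\,dz=-\eta_j-e\omega_j$ together with the addition formula for $\zeta$, one finds that the constraint above collapses the period integrals all the way to $\oint_{\ell_j}\phi^{-2}=-2(\wp(a)\omega_j+\eta_j)/\wp''(a)$, whence $\wp(a)=-(C\eta_1-\eta_2)/(C-\tau)$. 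Substituting this into the expression for $\wp(p)$ yields exactly (\ref{17-1}), with $(\varepsilon_1,\varepsilon_2)=(1,1)$.

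The remaining cases arise when the two zeros sit at a pair of half-periods $\tfrac{\omega_i}{2},\tfrac{\omega_j}{2}$; then $\phi=\sqrt{(\wp-e_i)(\wp-e_j)}/\sqrt{\wp-\wp(p)}$, which automatically solves the Riccati equation (the regularity condition at each $\tfrac{\omega_i}{2}$ is vacuous because $\wp'$ vanishes there), so no auxiliary parameter survives. Running the same reduction-of-order computation on $\phi^{-2}=(\wp-\wp(p))/((\wp-e_i)(\wp-e_j))$, now using the half-period shift identity $\wp(z+\tfrac{\omega_i}{2})=e_i+(e_i-e_j)(e_i-e_l)/(\wp(z)-e_i)$ to evaluate $\oint_{\ell_m}(\wp-e_i)^{-1}dz$, produces a $C$ that is linear-fractional in $\wp(p)$; solving for $\wp(p)$ and simplifying through $e_1+e_2+e_3=0$ and $e_ie_j=e_l^2-\tfrac{g_2}{4}$ gives (\ref{18-1}) with $e_k=e_l$ the excluded value. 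The patterns (\ref{III-19}) come from the multipliers: $\sqrt{\wp-e_k}$ is single-valued with multipliers $+1$ under $\omega_k$ and $-1$ under the other two periods, so the three choices of $\{i,j\}$ distribute among the excluded index $l$ exactly as listed for $k=l$.

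The hard part will be the sign/branch bookkeeping rather than the elliptic algebra. One must pin down $(\varepsilon_1,\varepsilon_2)$ for each configuration, and this hinges on correctly tracking the branch of $\sqrt{\wp-\wp(p)}$ across the cycles $\ell_1,\ell_2$: because these cycles avoid $L+\Lambda_\tau$ (the setup around (\ref{II-iv}) and $\rho(\gamma_\pm)=-I_2$), neither encircles a single branch point $\pm p$, so the continuation is trivial and multiplies the naively computed signs by $+1$; a computation that ignored the cut $L$ would give the wrong answer. A secondary point requiring care is checking that the residue balance is genuinely exhausted by the one generic and three half-period configurations, so that (\ref{17-1}) and (\ref{18-1}) together cover all of Case (b); the miraculous collapse of the period integrals under the Riccati constraint is what forces the final formulas into the stated rational shape.
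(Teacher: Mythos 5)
Your proposal is, in substance, the paper's own proof: a one-dimensional common eigenspace, parity of the potential forcing a symmetric zero configuration, the split into the generic pair $\pm a$ versus the three half-period pairs, the constraint $\wp''(a)\left(\wp(a)-\wp(p)\right)=\wp'(a)^{2}$, and the monodromy data read off as $C=\oint_{\ell_{2}}\phi^{-2}\big/\oint_{\ell_{1}}\phi^{-2}$ via reduction of order. The only structural difference is at the front end: the paper quotes the Hermite--Krichever ansatz (\ref{y1}) with its constraint (\ref{89-4}) from \cite{CKL3}, whereas you rederive that form from the Riccati equation for $\phi'/\phi$ plus a residue count; this is a legitimate, more self-contained alternative (your period formula $\oint_{\ell_{j}}\phi^{-2}=-2(\wp(a)\omega_{j}+\eta_{j})/\wp''(a)$ and $\wp(a)=-(C\eta_{1}-\eta_{2})/(C-\tau)$ agree exactly with the paper's $\chi_{j}$ and (\ref{i68})). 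Two small points you flag as "requiring care" are indeed easy to close: the double-zero configuration at a half-period is impossible because $\omega_{k}/2$ ($k\geq1$) are ordinary points of (\ref{89-1}), where a nontrivial solution cannot vanish to second order.

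The one genuine gap is precisely the step you call the crux. Your justification that the continuation of $\sqrt{\wp-\wp(p)}$ along $\ell_{1},\ell_{2}$ is trivial --- ``the cycles avoid $L+\Lambda_{\tau}$, hence encircle no single branch point, hence the continuation is trivial'' --- is not a valid principle on a torus: a loop in the cut complement can reverse the sign of a square root of an elliptic function even when it encircles no branch points. Your own Case 2 exhibits this: $(\wp-e_{i})(\wp-e_{j})$ has \emph{no} branch points at all (every zero and pole is of even order), yet $\sqrt{(\wp-e_{i})(\wp-e_{j})}$ has multipliers $-1$ along some periods, and it is exactly these $-1$'s that produce (\ref{III-19}); applied uniformly, your principle would force all multipliers to be $+1$ and contradict your own (correct) sign table. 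The fact you need is the paper's (\ref{89-8}), which it quotes from \cite{CKL3}; it is a computation, not topology. A clean way to obtain it is the Abel-type winding argument: choosing the fundamental domain so that the whole segment $L$ lies in its interior, the winding numbers $n_{j}=\frac{1}{2\pi i}\oint_{\ell_{j}}d\log\left(\wp(z)-\wp(p)\right)$ satisfy $n_{2}\,\omega_{1}-n_{1}\,\omega_{2}=(p)+(-p)-2(0)=0$, forcing $n_{1}=n_{2}=0$ and continuation sign $(-1)^{n_{j}}=+1$; the same computation for $(\wp-e_{i})(\wp-e_{j})$ gives zero-minus-pole sum $\omega_{i}+\omega_{j}\neq0$ in $\Lambda_{\tau}$, which is where the $-1$'s in (\ref{III-19}) really come from. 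With this argument (or simply the citation of (\ref{89-8})) substituted for the faulty topological claim, your proof is complete and coincides with the paper's.
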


Here $g_{2}=g_{2}(\tau)$ and $g_{3}=g_{3}(\tau)$ are the coefficients of%
\[
\wp^{\prime}(z|\tau)^{2}=4\wp(z|\tau)^{3}-g_{2}(\tau)\wp(z|\tau)-g_{3}%
(\tau)=4\prod_{k=1}^{3}(\wp(z|\tau)-e_{k}(\tau)).
\]
The formulas (\ref{17-1})-(\ref{18-1}) first appeared in \cite[(3.68)-(3.69)]%
{Takemura} without detailed proofs and later was obtained in \cite{CKL2}
independently, as explicit expressions of Riccati type solutions of
Painlev\'{e} VI equation. But their connection with the monodromy data seems
not be well addressed. Theorem \ref{thm1} can be proved directly without
applying Painlev\'{e} VI equation; see Section \ref{sect}.

Conversely, it is natural to consider the following \emph{Riemann-Hilbert
problem}: For fixed $\tau \in \mathbb{H}$ and $p\not \in E_{\tau}[2]$, and given
any $C$ satisfying (\ref{17-1}) or (\ref{18-1}), whether there exists GLE
(\ref{89-1}) (i.e. exist $A,B\in \mathbb{C}$) such that this $C$ is its
monodromy data? This problem is fundamental but seems not be settled. Our
second main result is to answer this question positively.

\begin{theorem}
\label{thm2}Fix $\tau \in \mathbb{H}$ and $p\not \in E_{\tau}[2]$.
\end{theorem}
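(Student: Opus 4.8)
The plan is to solve the Riemann--Hilbert problem by running the computation behind Theorem \ref{thm1} in reverse: starting from a prescribed admissible $C$ I build the common eigenfunction explicitly, reconstruct the potential, and read off $A$ (with $B$ then fixed by (\ref{i60})). The starting observation is that in Case (b) the one-dimensional space of common eigenfunctions is spanned by a solution $y_1$ with $y_1(z+1)=\varepsilon_1 y_1(z)$, $y_1(z+\tau)=\varepsilon_2 y_1(z)$; since $\varepsilon_j\in\{\pm1\}$ and the local monodromy at $\pm p$ is $-I_2$ by (\ref{89-2}), the square $\Phi:=y_1^{2}$ is a genuine single-valued elliptic function, which is moreover even because the potential of (\ref{89-1}) is even and the eigenfunction is unique up to scale. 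Writing $y_2=y_1\int^{z}y_1(w)^{-2}\,dw$ for a second solution and using that $y_1^{-2}=\Phi^{-1}$ is elliptic, one gets $y_2(z+\omega_j)=\varepsilon_j\big(y_2(z)+(\oint_{\ell_j}\Phi^{-1}\,dw)\,y_1(z)\big)$; comparing with (\ref{Mono-2}) identifies the off-diagonal entries with the periods $\oint_{\ell_j}\Phi^{-1}$, so that the monodromy data is their ratio
\[
C=\frac{\oint_{\ell_2}\Phi^{-1}\,dz}{\oint_{\ell_1}\Phi^{-1}\,dz}.
\]
Thus the entire problem is to manufacture, for a given admissible $C$, an even elliptic $\Phi$ with the correct divisor whose period ratio equals $C$, and then to verify that $y_1''/y_1$ is of the form (\ref{89-1}).

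For $(\varepsilon_1,\varepsilon_2)=(1,1)$ the function $\Phi$ has a double pole at $0$, simple poles at $\pm p$ and, being even of order $4$, two double zeros at some $\pm a$, so $\Phi^{-1}=\alpha[\wp(z-a)+\wp(z+a)-2\wp(a)]$, the constant $-2\wp(a)$ forcing the double zero at $0$. Integrating termwise over $\ell_1,\ell_2$ with $\int\wp=-\zeta$ and the quasi-period relations (\ref{quasi}) gives $\oint_{\ell_1}\Phi^{-1}=-2\alpha(\eta_1+\wp(a))$ and $\oint_{\ell_2}\Phi^{-1}=-2\alpha(\eta_2+\tau\wp(a))$, hence $C=\frac{\eta_2+\tau\wp(a)}{\eta_1+\wp(a)}$, i.e. $\wp(a)=\frac{\eta_2-C\eta_1}{C-\tau}$. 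I would define $a$ by this value, and then the remaining requirement that $\Phi$ have only simple poles at $\pm p$ is exactly $\wp(p+a)+\wp(p-a)=2\wp(a)$; using $\wp(p\pm a)=-\wp(p)-\wp(a)+\tfrac14\big(\tfrac{\wp'(p)\mp\wp'(a)}{\wp(p)-\wp(a)}\big)^{2}$ one checks that, after substituting the value of $\wp(a)$, this is equivalent to (\ref{17-1}). Finally set $y_1:=\sqrt{\Phi}$ and $I:=y_1''/y_1=\frac{2\Phi\Phi''-(\Phi')^{2}}{4\Phi^{2}}$, which is elliptic and even; a short local expansion shows $\Phi$ has no cubic term at $\pm a$, so $I$ is regular there, and since $I$ has a double pole of coefficient $2$ at $0$ and double poles of coefficient $\tfrac34$ at $\pm p$, it must coincide with the potential of (\ref{89-1}) for the $A$ read off from the residue of $I$ at $p$.

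For the twisted cases $(\varepsilon_1,\varepsilon_2)\neq(1,1)$ the same $\Phi$ is even of order $4$ with a double pole at $0$ and simple poles at $\pm p$, but now the quasi-periodicity signs force its two double zeros to sit at two half-periods $\omega_i/2,\omega_j/2$ with $\{i,j,k\}=\{1,2,3\}$, the remaining index $k$ being exactly the one recorded in (\ref{III-19}). I would write $\Phi^{-1}=\alpha_i\wp(z-\tfrac{\omega_i}{2})+\alpha_j\wp(z-\tfrac{\omega_j}{2})+\beta$ (each summand is even in $z$, since $\omega_i$ is a full period and $\wp'$ vanishes at half-periods), impose the double zero at $0$ and the simple zeros at $\pm p$ to pin down the ratios $\alpha_i:\alpha_j:\beta$ in terms of $e_i,e_j$ and $\wp(p)$, and compute the period ratio as before; matching it to the prescribed $C$ yields (\ref{18-1}). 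The reconstruction of $y_1$ and the values of $A,B$ then proceed exactly as in the untwisted case.

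The main obstacle is not the construction of $\Phi$ but the bookkeeping needed to certify that the resulting $y_1$ carries exactly the prescribed signs $(\varepsilon_1,\varepsilon_2)$ and that $y_2=y_1\int^{z}y_1^{-2}$ genuinely realizes the non-completely-reducible normal form (\ref{Mono-2}) rather than splitting off a second eigenfunction; this amounts to showing $\oint_{\ell_1}\Phi^{-1}\neq0$, i.e. that we are in Case (b) and not Case (a). I expect this to follow from the admissibility of $C$ (the degenerate value $\wp(a)=-\eta_1$ corresponding precisely to the limiting case $C=\infty$, which is treated separately), together with a careful sign analysis of $\sqrt{\Phi}$ along the two cycles, which is ultimately where the distinction between the four characteristics and the sign table (\ref{III-19}) gets verified.
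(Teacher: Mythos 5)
Your proposal is correct in outline, but it takes a genuinely different route from the paper's. The paper deduces Theorem \ref{thm2} from Painlev\'{e} VI: it realizes $p$ as a value of the Riccati-type solution $p_{C}^{(0)}(\tau)$ of (\ref{124-5}) (Theorem A), invokes the isomonodromic correspondence (Theorem B) together with the result of \cite{CKL3} quoted in Remark \ref{rem} that the corresponding GLE is not completely reducible, and then, since isomonodromy makes the data $C_{1}$ constant in $\tau$, pins down $C_{1}=C$ by comparing (\ref{10})--(\ref{11}) with (\ref{III-17}) as functions of $\tau$. You instead run the proof of Theorem \ref{thm1} backwards: set $\wp(a):=\frac{\eta_{2}-C\eta_{1}}{C-\tau}$, put $\Phi^{-1}:=\alpha\left[\wp(z-a)+\wp(z+a)-2\wp(a)\right]$, and reconstruct the equation as $y''=(y_{1}''/y_{1})y$ with $y_{1}=\sqrt{\Phi}$. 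The key checks do go through: with $X=\wp(a)$, equation (\ref{17-1}) reduces to $\wp(p)=\frac{4X^{3}+g_{2}X+2g_{3}}{12X^{2}-g_{2}}$, which is exactly (\ref{iv}) and hence equivalent to $\Phi^{-1}(\pm p)=0$; the absence of a cubic term of $\Phi$ at $\pm a$ makes $y_{1}''/y_{1}$ regular there, so it is an even elliptic function with precisely the pole structure of (\ref{89-1}); and the Legendre relation gives $\eta_{1}+\wp(a)=2\pi i/(\tau-C)$, so the period ratio is automatically the prescribed $C$ and $(\chi_{1},\chi_{2})\neq(0,0)$, i.e.\ one really lands in Case (b) with data $C$. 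What your route buys: it is elementary and self-contained (no Painlev\'{e} VI, no isomonodromy, no input from \cite{CKL3}), it produces $A$ explicitly from the residue at $p$ of the reconstructed potential, and it sidesteps entirely the counting obstruction the paper flags before Section 2. What the paper's route buys: it avoids the case-by-case elliptic-function bookkeeping, and it is the argument that scales to general $n_{k}$ (the stated aim of the closing remark), where no explicit ansatz like yours is available. The items you defer are real obligations but not gaps, since they are settled by computations already in the paper: the signs $(\varepsilon_{1},\varepsilon_{2})$ follow by writing $\sqrt{\Phi}$ in the sigma-quotient form (\ref{y1}) and applying (\ref{i61}) and (\ref{89-8}), exactly as in the proof of Theorem \ref{thm1}; and in the twisted cases the constructed $\Phi$ depends only on $p$ and $k$, while its period ratio must equal the prescribed $C$ because (\ref{18-1}) determines $C$ uniquely from $\wp(p)$ (the underlying M\"{o}bius correspondence is nondegenerate, by the Legendre relation and the distinctness of $e_{1},e_{2},e_{3}$).
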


\begin{itemize}
\item[(1)] If $C\in \mathbb{C}\cup \{ \infty \}$ satisfies the cubic equation
(\ref{17-1}), then there exists $A\in \mathbb{C}$ (and $B$ is given by $(p,A)$
via (\ref{i60}) with $(n_{0},n_{1},n_{2},n_{3})=(1,0,0,0)$) such that for the
correpsonding GLE (\ref{89-1}), up to a common conjugation,%
\begin{equation}
\rho(\ell_{1})=%
\begin{pmatrix}
1 & 0\\
1 & 1
\end{pmatrix}
,\text{ \  \  \ }\rho(\ell_{2})=%
\begin{pmatrix}
1 & 0\\
C & 1
\end{pmatrix}
. \label{17-3}%
\end{equation}

\item[(2)] Fix $k\in \{1,2,3\}$. If $C\in \mathbb{C}\cup \{ \infty \}$ satisfies
the equation (\ref{18-1}), then there exists $A\in \mathbb{C}$ such that for
the correpsonding GLE (\ref{89-1}), up to a common conjugation,%
\[
\rho(\ell_{1})=\varepsilon_{1}%
\begin{pmatrix}
1 & 0\\
1 & 1
\end{pmatrix}
,\text{ \  \  \ }\rho(\ell_{2})=\varepsilon_{2}%
\begin{pmatrix}
1 & 0\\
C & 1
\end{pmatrix}
,
\]
where $(\varepsilon_{1},\varepsilon_{2})$ is given by (\ref{III-19}).
\end{itemize}

If we know that the number of $A$'s with the monodromy of the corresponding
GLE (\ref{89-1}) being not completely reducible is $6$, then Theorem
\ref{thm2} might follow from Theorem \ref{thm1}. However, this assumption can
not hold for all $(p,\tau)$, and the proof of Theorem \ref{thm2} becomes
subtle. In Section 3, we will present a proof by applying the connection
between GLE (\ref{89-1}) and Painlev\'{e} VI equation. Theorem \ref{thm1} will
be proved in Section 2.

\section{Proof of Theorem \ref{thm1}}

\label{sect}

The purpose of this section is to prove Theorem \ref{thm1}. Let $y_{1}(z)$ be
a common eigenfunction. Then it is known (cf. \cite{CKL3}) that $y_{1}(z)$ can
be expressed as%
\begin{equation}
y_{1}(z)=e^{(r\eta_{1}+s\eta_{2})z}\frac{\sigma(z-a_{1})\sigma(z-a_{2}%
)}{\sigma(z)^{2}}\frac{\sigma(z)}{[\sigma(z-p)\sigma(z+p)]^{\frac{1}{2}}},
\label{y1}%
\end{equation}
where $(r,s)$ is determined by%
\begin{equation}
r+s\tau=a_{1}+a_{2},\text{ \ }r\eta_{1}+s\eta_{2}=\zeta(a_{1})+\zeta(a_{2}),
\label{89-6}%
\end{equation}
and $a_{1},a_{2}\not \in \Lambda_{\tau}$ satisfy
\begin{align}
&  \left[  \zeta(a_{i}+p)+\zeta(a_{i}-p)-2\zeta(a_{i})\right] \label{89-4}\\
=  &  2\left[  \zeta(a_{i}-a_{j})+\zeta(a_{j})-\zeta(a_{i})\right]  ,\text{
\ }\{i,j\}=\{1,2\}.\nonumber
\end{align}
Here $\sigma(z)=\sigma(z|\tau)$ is the Weierstrass sigma function defined by
$\frac{\sigma^{\prime}(z)}{\sigma(z)}:=\zeta(z)$. It is known that $\sigma(z)$
is an odd entire function with simple zeros only at the lattice points
$\Lambda_{\tau}$. Remark that (\ref{89-4}) can be easily obtained by inserting
(\ref{y1}) into GLE (\ref{89-1}).

\begin{proof}
[Proof of Theorem \ref{thm1}]Suppose the monodromy representation of GLE
(\ref{89-1}) is \emph{not} completely reducible, i.e. the space of common
eigenfunctions is of dimension $1$. Then $y_{1}(-z)=\pm y_{1}(z)$ (because
$y_{1}(-z)$ is also a common eigenfunction), i.e.%
\begin{equation}
\{a_{1},a_{2}\}=\{-a_{1},-a_{2}\} \text{ \ in \ }E_{\tau}. \label{89-7}%
\end{equation}
Remark that $\frac{\sigma(z)^{2}}{\sigma(z-p)\sigma(z+p)}$ is even elliptic.
Since $\ell_{j}$ does not intersect with $L+\Lambda_{\tau}$, the function
$\frac{\sigma(z)}{[\sigma(z-p)\sigma(z+p)]^{\frac{1}{2}}}$ is invariant under
analytic continuation along $\ell_{j}$ (see \cite{CKL3}), i.e.%
\begin{equation}
\ell_{j}^{\ast}\frac{\sigma(z)}{[\sigma(z-p)\sigma(z+p)]^{\frac{1}{2}}}%
=\frac{\sigma(z)}{[\sigma(z-p)\sigma(z+p)]^{\frac{1}{2}}},\text{ \ }j=1,2.
\label{89-8}%
\end{equation}
Here we denote by $\ell_{j}^{\ast}f(z)$ to be the function obtained from
$f(z)$ under analytic continuation along $\ell_{j}$. By (\ref{89-7}) there are
two cases.

\textbf{Case 1.} $a_{j}\not \in E_{\tau}[2]$ for $j=1,2$, then $a_{2}=-a_{1}$.

Then $(r,s)=(0,0)$, i.e.%
\[
y_{1}(z)=\frac{\sigma(z-a_{1})\sigma(z+a_{1})}{\sigma(z)^{2}}\frac{\sigma
(z)}{[\sigma(z-p)\sigma(z+p)]^{\frac{1}{2}}}.
\]
By (\ref{89-8}) and the transformation law%
\begin{equation}
\sigma(z+\omega_{j})=-e^{(z+{\omega_{j}}/{2})\eta_{j}}\sigma(z),\text{
\ }j=1,2, \label{i61}%
\end{equation}
it is easy to prove that%
\begin{equation}
\ell_{j}^{\ast}y_{1}(z)=y_{1}(z)\text{ \ for \ }j=1,2. \label{i67}%
\end{equation}
Therefore, $y_{1}(z)^{2}$ is even elliptic, i.e. up to a constant,%
\begin{equation}
y_{1}(z)^{-2}=\frac{\wp(z)-\wp(p)}{(\wp(z)-\wp(a_{1}))^{2}}. \label{i65}%
\end{equation}
On the other hand, it follows from the addition formulas%
\[
\zeta(u+v)+\zeta(u-v)-2\zeta(u)=\frac{\wp^{\prime}(u)}{\wp(u)-\wp(v)},
\]%
\[
\zeta(2u)-2\zeta(u)=\frac{\wp^{\prime \prime}(u)}{2\wp^{\prime}(u)},
\]
and (\ref{89-4}) that%
\[
\frac{\wp^{\prime}(a_{1})}{\wp(a_{1})-\wp(p)}=2\left[  \zeta(2a_{1}%
)-2\zeta(a_{i})\right]  =\frac{\wp^{\prime \prime}(a_{1})}{\wp^{\prime}(a_{1}%
)},
\]
i.e.%
\begin{equation}
\wp(p)=\wp(a_{1})-\frac{\wp^{\prime}(a_{1})^{2}}{\wp^{\prime \prime}(a_{1})}.
\label{iv}%
\end{equation}
From here and (\ref{i65}), it is easy to see that the \emph{residues} of
$y_{1}(z)^{-2}$ at $\pm a_{1}$ are both $0$, so%
\[
y_{1}(z)^{-2}=c_{3}\left[  \wp(z-a_{1})+\wp(z+a_{1})-2\wp(a_{1})\right]  ,
\]
where $c_{3}=\frac{\wp(a_{1})-\wp(p)}{\wp^{\prime}(a_{1})^{2}}$. Then%
\[
\chi(z):=\int_{0}^{z}\frac{1}{y_{1}(\xi)^{2}}d\xi=-c_{3}(\zeta(z-a_{1}%
)+\zeta(z+a_{1})+2\wp(a_{1})z),
\]
and so $\chi(z)$ is quasi-periodic with two quasi-periods:
\[
\chi_{1}=\chi(z+1)-\chi(z)=-2c_{3}(\eta_{1}+\wp(a_{1})),
\]%
\[
\chi_{2}=\chi(z+\tau)-\chi(z)=-2c_{3}(\eta_{2}+\wp(a_{1})\tau).
\]
Define%
\begin{equation}
C:=\frac{\chi_{2}}{\chi_{1}}=\frac{\eta_{2}+\wp(a_{1})\tau}{\eta_{1}+\wp
(a_{1})}, \label{i68}%
\end{equation}
and $y_{2}(z):=y_{1}(z)\chi(z)$. Since $y_{1}(z)$ is a solution of GLE
(\ref{89-1}) and $\chi(z)=\int_{0}^{z}\frac{1}{y_{1}(\xi)^{2}}d\xi$, it is
easy to see that $y_{2}(z)$ is a \emph{linearly independent} solution of GLE
(\ref{89-1}) with respect to $y_{1}(z)$. Recalling (\ref{i67}) and that
$y_{2}(z)$ is not a common eigenfunction, we know that $\chi_{1},\chi_{2}$ can
not vanish simultaneously. Consequently, if $\chi_{1}=0$, then $\chi_{2}%
\not =0$, $C=\infty$ and so%
\[
\ell_{1}^{\ast}%
\begin{pmatrix}
\chi_{2}y_{1}(z)\\
y_{2}(z)
\end{pmatrix}
=%
\begin{pmatrix}
\chi_{2}y_{1}(z)\\
y_{2}(z)
\end{pmatrix}
,\text{ \ i.e. }\rho(\ell_{1})=I_{2},
\]%
\[
\ell_{2}^{\ast}%
\begin{pmatrix}
\chi_{2}y_{1}(z)\\
y_{2}(z)
\end{pmatrix}
=%
\begin{pmatrix}
1 & 0\\
1 & 1
\end{pmatrix}%
\begin{pmatrix}
\chi_{2}y_{1}(z)\\
y_{2}(z)
\end{pmatrix}
,\text{ \ i.e. }\rho(\ell_{2})=%
\begin{pmatrix}
1 & 0\\
1 & 1
\end{pmatrix}
.
\]
If $\chi_{1}\not =0$, then $C\not =\infty$ and so%
\[
\ell_{1}^{\ast}%
\begin{pmatrix}
\chi_{1}y_{1}(z)\\
y_{2}(z)
\end{pmatrix}
=%
\begin{pmatrix}
1 & 0\\
1 & 1
\end{pmatrix}%
\begin{pmatrix}
\chi_{1}y_{1}(z)\\
y_{2}(z)
\end{pmatrix}
,\text{ \ i.e. }\rho(\ell_{1})=%
\begin{pmatrix}
1 & 0\\
1 & 1
\end{pmatrix}
,
\]%
\[
\ell_{2}^{\ast}%
\begin{pmatrix}
\chi_{1}y_{1}(z)\\
y_{2}(z)
\end{pmatrix}
=%
\begin{pmatrix}
1 & 0\\
C & 1
\end{pmatrix}%
\begin{pmatrix}
\chi_{1}y_{1}(z)\\
y_{2}(z)
\end{pmatrix}
,\text{ \ i.e. }\rho(\ell_{2})=%
\begin{pmatrix}
1 & 0\\
C & 1
\end{pmatrix}
.
\]
Clearly (\ref{i68}) gives%
\[
\wp(a_{1})=\frac{C\eta_{1}-\eta_{2}}{\tau-C}.
\]
Inserting this into (\ref{iv}) and using $\wp^{\prime}(a_{1})^{2}=4\wp
(a_{1})^{3}-g_{2}\wp(a_{1})-g_{3}$, $\wp^{\prime \prime}(a_{1})=6\wp(a_{1}%
)^{2}-g_{2}/2$, we easily obtain%
\[
\wp(p)=\frac{2g_{3}(C-\tau)^{3}-4(C\eta_{1}-\eta_{2})^{3}-g_{2}(C\eta_{1}%
-\eta_{2})(C-\tau)^{2}}{(C-\tau)[12(C\eta_{1}-\eta_{2})^{2}-g_{2}(C-\tau
)^{2}]}.
\]
This proves (\ref{17-1}) with $(\varepsilon_{1},\varepsilon_{2})=(1,1)$.

\textbf{Case 2.} $a_{l}\in E_{\tau}[2]$ for some $l\in \{1,2\}$, then
$\{a_{1},a_{2}\}=\{ \frac{\omega_{i}}{2},\frac{\omega_{j}}{2}\}$ for some
$i\not =j\in \{1,2,3\}$. Denote $\eta_{3}=\eta_{1}+\eta_{2}$.

Note from (\ref{89-6}) that $r\eta_{1}+s\eta_{2}=\frac{1}{2}(\eta_{i}+\eta
_{j})$. Let $\{k\}=\{1,2,3\}/\{i,j\}$. By (\ref{89-8}) and (\ref{i61}), it is
easy to prove that%
\[
\ell_{1}^{\ast}y_{1}(z)=\left(  -1\right)  ^{k+m\left(  k\right)  }%
y_{1}(z),\text{ \ }\ell_{2}^{\ast}y_{1}(z)=\left(  -1\right)  ^{k}y_{1}(z).
\]
Then $y_{1}(z)^{2}$ is even elliptic, i.e. up to a constant,%
\[
y_{1}(z)^{-2}=\frac{\wp(z)-\wp(p)}{(\wp(z)-e_{i})(\wp(z)-e_{j})}.
\]
Again a direct computation shows that the residue of $y_{1}(z)^{-2}$ at
$\frac{\omega_{i}}{2},\frac{\omega_{j}}{2}$ are both $0$, so%
\[
y_{1}(z)^{-2}=c_{3}\left(  \wp(z-\tfrac{\omega_{i}}{2})-e_{i}\right)
+c_{4}\left(  \wp(z-\tfrac{\omega_{j}}{2})-e_{j}\right)  ,
\]
where%
\[
c_{3}=\frac{2(e_{i}-\wp(p))}{(e_{i}-e_{j})\wp^{\prime \prime}(\frac{\omega_{i}%
}{2})},\text{ \ }c_{4}=\frac{2(e_{j}-\wp(p))}{(e_{j}-e_{i})\wp^{\prime \prime
}(\frac{\omega_{j}}{2})}.
\]
By $\wp^{\prime \prime}(\frac{\omega_{i}}{2})=6e_{i}^{2}-g_{2}/2$,
$g_{2}=4(e_{k}^{2}-e_{i}e_{j})$ and $e_{i}+e_{j}+e_{k}=0$, a direct
computation gives%
\begin{equation}
c_{3}+c_{4}=\frac{2g_{2}-12e_{k}^{2}-12e_{k}\wp(p)}{\wp^{\prime \prime}%
(\frac{\omega_{i}}{2})\wp^{\prime \prime}(\frac{\omega_{j}}{2})}, \label{iv1}%
\end{equation}%
\[
c_{3}e_{i}+c_{4}e_{j}=\frac{g_{2}e_{k}+12\wp(p)e_{k}^{2}-2g_{2}\wp(p)}%
{\wp^{\prime \prime}(\frac{\omega_{i}}{2})\wp^{\prime \prime}(\frac{\omega_{j}%
}{2})}.
\]
Define%
\[
\chi(z):=\int_{0}^{z}\frac{1}{y_{1}(\xi)^{2}}d\xi=-c_{3}\left(  \zeta
(z-\tfrac{\omega_{i}}{2})+e_{i}z\right)  -c_{4}\left(  \zeta(z-\tfrac
{\omega_{j}}{2})+e_{j}z\right)  ,
\]
then it follows from the Lengrede relation $\tau \eta_{1}-2\pi i=\eta_{2}$
that
\[
\chi_{1}=\chi(z+1)-\chi(z)=-\eta_{1}(c_{3}+c_{4})-(c_{3}e_{i}+c_{4}e_{j}),
\]%
\begin{align*}
\chi_{2}  &  =\chi(z+\tau)-\chi(z)=-\eta_{2}(c_{3}+c_{4})-(c_{3}e_{i}%
+c_{4}e_{j})\tau \\
&  =\tau \chi_{1}+2\pi i(c_{3}+c_{4}).
\end{align*}
Again we define%
\begin{equation}
C:=\frac{\chi_{2}}{\chi_{1}}=\tau+\frac{2\pi i(c_{3}+c_{4})}{\chi_{1}},
\label{iv5}%
\end{equation}
and $y_{2}(z):=y_{1}(z)\chi(z)$. Then as above, we have the following
conclusions: If $\chi_{1}=0$, then $\chi_{2}\not =0$, $C=\infty$ and so%
\[
\ell_{1}^{\ast}%
\begin{pmatrix}
\chi_{2}y_{1}(z)\\
y_{2}(z)
\end{pmatrix}
=\left(  -1\right)  ^{k+m\left(  k\right)  }%
\begin{pmatrix}
\chi_{2}y_{1}(z)\\
y_{2}(z)
\end{pmatrix}
,
\]%
\[
\ell_{2}^{\ast}%
\begin{pmatrix}
\chi_{2}y_{1}(z)\\
y_{2}(z)
\end{pmatrix}
=\left(  -1\right)  ^{k}%
\begin{pmatrix}
1 & 0\\
1 & 1
\end{pmatrix}%
\begin{pmatrix}
\chi_{2}y_{1}(z)\\
y_{2}(z)
\end{pmatrix}
,
\]%
\[
\text{i.e. }\rho(\ell_{1})=\left(  -1\right)  ^{k+m\left(  k\right)  }%
I_{2}\text{ \ and \ }\rho(\ell_{2})=\left(  -1\right)  ^{k}%
\begin{pmatrix}
1 & 0\\
1 & 1
\end{pmatrix}
.
\]
If $\chi_{1}\not =0$, then $C\not =\infty$ and so%
\[
\ell_{1}^{\ast}%
\begin{pmatrix}
\chi_{1}y_{1}(z)\\
y_{2}(z)
\end{pmatrix}
=\left(  -1\right)  ^{k+m\left(  k\right)  }%
\begin{pmatrix}
1 & 0\\
1 & 1
\end{pmatrix}%
\begin{pmatrix}
\chi_{1}y_{1}(z)\\
y_{2}(z)
\end{pmatrix}
,
\]%
\[
\ell_{2}^{\ast}%
\begin{pmatrix}
\chi_{1}y_{1}(z)\\
y_{2}(z)
\end{pmatrix}
=\left(  -1\right)  ^{k}%
\begin{pmatrix}
1 & 0\\
C & 1
\end{pmatrix}%
\begin{pmatrix}
\chi_{1}y_{1}(z)\\
y_{2}(z)
\end{pmatrix}
,
\]%
\[
\text{i.e. }\rho(\ell_{1})=\left(  -1\right)  ^{k+m\left(  k\right)  }%
\begin{pmatrix}
1 & 0\\
1 & 1
\end{pmatrix}
\text{ \ and \ }\rho(\ell_{2})=\left(  -1\right)  ^{k}%
\begin{pmatrix}
1 & 0\\
C & 1
\end{pmatrix}
.
\]
By (\ref{iv1})-(\ref{iv5}) we have
\[
\frac{C\eta_{1}-\eta_{2}}{\tau-C}=-\eta_{1}-\frac{\chi_{1}}{c_{3}+c_{4}}%
=\frac{c_{3}e_{i}+c_{4}e_{j}}{c_{3}+c_{4}}=\frac{(\frac{g_{2}}{2}-3e_{k}%
^{2})\wp(p)-\frac{g_{2}}{4}e_{k}}{3e_{k}\wp(p)+3e_{k}^{2}-\frac{g_{2}}{2}},
\]
and so%
\[
\wp(p)=\frac{(\frac{g_{2}}{2}-3e_{k}^{2})(C\eta_{1}-\eta_{2})+\frac{g_{2}}%
{4}e_{k}(C-\tau)}{3e_{k}(C\eta_{1}-\eta_{2})+(\frac{g_{2}}{2}-3e_{k}%
^{2})(C-\tau)}.
\]
This proves (\ref{18-1}) with $(\varepsilon_{1},\varepsilon_{2})$ given by
(\ref{III-19}).
\end{proof}

\section{Relation with Painlev\'{e} VI equation}

\label{RTS-PVI}

This section is devoted to the proof of Theorem \ref{thm2}. First we recall
the relation between GLE (\ref{GLE-2}) and Panlev\'{e} VI equation. The
well-known Painlev\'{e} VI equation with four parameters $(\alpha,\beta
,\gamma,\delta)$ (PVI$(\alpha,\beta,\gamma,\delta)$) is written as%
\begin{align}
\frac{d^{2}\lambda}{dt^{2}}=  &  \frac{1}{2}\left(  \frac{1}{\lambda}+\frac
{1}{\lambda-1}+\frac{1}{\lambda-t}\right)  \left(  \frac{d\lambda}{dt}\right)
^{2}-\left(  \frac{1}{t}+\frac{1}{t-1}+\frac{1}{\lambda-t}\right)
\frac{d\lambda}{dt}\nonumber \\
&  +\frac{\lambda(\lambda-1)(\lambda-t)}{t^{2}(t-1)^{2}}\left[  \alpha
+\beta \frac{t}{\lambda^{2}}+\gamma \frac{t-1}{(\lambda-1)^{2}}+\delta
\frac{t(t-1)}{(\lambda-t)^{2}}\right]  . \label{46-0}%
\end{align}
Due to its connection with many different disciplines in mathematics and
physics, PVI (\ref{46-0}) has been extensively studied in the past several
decades. See \cite{Hit1,GP,Lisovyy-Tykhyy,Okamoto1} and references therein.

One of the fundamental properties for PVI (\ref{46-0}) is the so-called
\emph{Painlev\'{e} property} which says that any solution $\lambda(t)$ of
(\ref{46-0}) has neither movable branch points nor movable essential
singularities; in other words, for any $t_{0}\in \mathbb{C}\backslash \{0,1\}$,
either $\lambda(t)$ is holomorphic at $t_{0}$ or $\lambda(t)$ has a pole at
$t_{0}$. Then it is reasonable to lift PVI (\ref{46-0}) to the covering space
$\mathbb{H=}\{ \tau|\operatorname{Im}\tau>0\}$ of $\mathbb{C}\backslash
\{0,1\}$ by the following transformation:%
\begin{equation}
t=\frac{e_{3}(\tau)-e_{1}(\tau)}{e_{2}(\tau)-e_{1}(\tau)},\text{ \ }%
\lambda(t)=\frac{\wp(p(\tau)|\tau)-e_{1}(\tau)}{e_{2}(\tau)-e_{1}(\tau)}.
\label{II-130}%
\end{equation}
Consequently, $p(\tau)$ satisfies the following \emph{elliptic form} of PVI
(cf. \cite{Babich-Bordag,Y.Manin}):
\begin{equation}
\frac{d^{2}p(\tau)}{d\tau^{2}}=\frac{-1}{4\pi^{2}}\sum_{k=0}^{3}\alpha_{k}%
\wp^{\prime}\left(  \left.  p(\tau)+\tfrac{\omega_{k}}{2}\right \vert
\tau \right)  , \label{124-0}%
\end{equation}
with parameters $\left(  \alpha_{0},\alpha_{1},\alpha_{2},\alpha_{3}\right)
=(\alpha,-\beta,\gamma,\tfrac{1}{2}-\delta)$. The Painlev\'{e} property of PVI
(\ref{46-0}) implies that function $\wp(p(\tau)|\tau)$ is a single-valued
meromorphic function in $\mathbb{H}$.

In this note, we only consider the special case $\alpha_{0}=\frac{9}{8}$ and
$\alpha_{k}=\frac{1}{8}$ for $k\geq1$, i.e.%
\begin{equation}
\frac{d^{2}p(\tau)}{d\tau^{2}}=\frac{-1}{4\pi^{2}}\left(  \tfrac{9}{8}%
\wp^{\prime}\left(  \left.  p(\tau)\right \vert \tau \right)  +\sum_{k=1}%
^{3}\tfrac{1}{8}\wp^{\prime}\left(  \left.  p(\tau)+\tfrac{\omega_{k}}%
{2}\right \vert \tau \right)  \right)  , \label{124-5}%
\end{equation}
which is the elliptic form of PVI$(\tfrac{9}{8},\tfrac{-1}{8},\tfrac{1}%
{8},\tfrac{3}{8})$.

It is well known (cf. \cite{Okamoto1,Takemura,CKL2}) that any solution of
PVI$(\tfrac{9}{8},\tfrac{-1}{8},\tfrac{1}{8},\tfrac{3}{8})$ can be obtained
from that of PVI$(\tfrac{1}{8},\tfrac{-1}{8},\tfrac{1}{8},\tfrac{3}{8})$ (i.e.
the case studied by Hitchin \cite{Hit1}) by the Okamoto transformation
\cite{Okamoto1}. By this way, the following result gives explicit expressions
of some solutions of the elliptic from (\ref{124-5}).\medskip

\noindent \textbf{Theorem A. }\cite{Takemura,CKL2} \emph{For any }%
$k\in \{0,1,2,3\}$\emph{ and} $C\in \mathbb{C}\cup \{ \infty \}$, $p_{C}%
^{(k)}(\tau)$\  \emph{is a solution of the elliptic form (\ref{124-5}), where
}$\wp(p_{C}^{(k)}(\tau)|\tau)$\emph{ are given by:}{\allowdisplaybreaks%
\begin{align}
&  \wp(p_{C}^{(0)}(\tau)|\tau)=\label{III-17}\\
&  \frac{2g_{3}(\tau)(C-\tau)^{3}-4(C\eta_{1}(\tau)-\eta_{2}(\tau))^{3}%
-g_{2}(\tau)(C\eta_{1}(\tau)-\eta_{2}(\tau))(C-\tau)^{2}}{(C-\tau
)[12(C\eta_{1}(\tau)-\eta_{2}(\tau))^{2}-g_{2}(\tau)(C-\tau)^{2}]},\nonumber
\end{align}
} \emph{and}%
\begin{equation}
\wp(p_{C}^{(k)}(\tau)|\tau)=\frac{(\frac{g_{2}(\tau)}{2}-3e_{k}(\tau
)^{2})(C\eta_{1}(\tau)-\eta_{2}(\tau))+\frac{g_{2}(\tau)}{4}e_{k}(\tau
)(C-\tau)}{3e_{k}(\tau)(C\eta_{1}(\tau)-\eta_{2}(\tau))+(\frac{g_{2}(\tau)}%
{2}-3e_{k}(\tau)^{2})(C-\tau)} \label{III-18}%
\end{equation}
\emph{with} $k\in \{1,2,3\}$. \emph{Equivalently,}%
\[
\lambda_{C}^{(k)}(t):=\frac{\wp(p_{C}^{(k)}(\tau)|\tau)-e_{1}(\tau)}%
{e_{2}(\tau)-e_{1}(\tau)},\text{ }t=\frac{e_{3}(\tau)-e_{1}(\tau)}{e_{2}%
(\tau)-e_{1}(\tau)}%
\]
\emph{is a Riccati type solution of PVI}$(\frac{9}{8},\frac{-1}{8},\frac{1}%
{8},\frac{3}{8})$.\medskip

We recall \cite{Takemura,Chen-Kuo-Lin} that the elliptic form (\ref{124-5})
governs the isomonodromic deformation of GLE (\ref{89-1}). More
precisely,$\medskip$

\noindent \textbf{Theorem B.} \cite{Chen-Kuo-Lin}$\ p(\tau)$ \emph{is a
solution of (\ref{124-5}) if and only if there exists $A(\tau)$ (Consequently,
$B(\tau)$ is determined by $(p(\tau),A(\tau))$ via (\ref{i60}) with }%
$(n_{0},n_{1},n_{2},n_{3})=(1,0,0,0)$\emph{) such that GLE (\ref{89-1}) with
$(p,A,B,\tau)=(p(\tau),A(\tau),B(\tau),\tau)$ preserves the monodromy as
$\tau$ deforms}.\medskip

We call such GLE (\ref{89-1}) with $(p,A,B)=(p(\tau)$, $A(\tau)$, $B(\tau))$
to be the corresponding GLE of the solution $p(\tau)$.

\begin{remark}
\label{rem}The formulas (\ref{III-17})-(\ref{III-18}) were derived in
\cite{Takemura,Chen-Kuo-Lin} but the monodromy of the corresponding GLE
(\ref{89-1}) was not thoroughly discussed. In \cite{CKL3}, it was proved that
for $p_{C}^{(k)}(\tau)$, the monodromy of the corresponding GLE (\ref{89-1})
is not completely reducible (i.e. the form of (\ref{Mono-2})) but without
proving that the monodromy data is precisely the same $C$. This assertion,
which is strongly suggested by Theorem \ref{thm1}, will be proved in following.
\end{remark}

\begin{proof}
[Proof of Theorem \ref{thm2}]Fix $\tau_{0}\in \mathbb{H}$ and $p_{0}%
\not \in E_{\tau_{0}}[2]$.

Suppose $C\in \mathbb{C}\cup \{ \infty \}$ satisfies the cubic equation
(\ref{17-1}) with $\tau=\tau_{0}$ and $p=p_{0}$. Our goal is to prove the
existence of $A_{0}\in \mathbb{C}$ such that (\ref{17-3}) holds for the GLE
(\ref{89-1}) with $(p,A,\tau)=(p_{0},A_{0},\tau_{0})$.

Note that $\wp(p_{0}|\tau_{0})=\wp(p_{C}^{(0)}(\tau_{0})|\tau_{0})$, where
$p_{C}^{(0)}(\tau)$ is a solution of the elliptic form (\ref{124-5}) given in
Theorem A, so without loss of generality, we may assume $p_{0}=p_{C}%
^{(0)}(\tau_{0})$ in $E_{\tau_{0}}$. Then by Theorem B and Remark \ref{rem},
there is $A(\tau)$ such that the corresponding GLE (\ref{89-1}) with
$(p,A,\tau)=(p_{C}^{(0)}(\tau),A(\tau),$ $\tau)$ is monodromy preserving with
\textit{not} completely monodromy representation as long as $p_{C}^{(0)}%
(\tau)\not \in E_{\tau}[2]$. Fix any $\tau \in \mathbb{H}$ such that
$p_{C}^{(0)}(\tau)\not \in E_{\tau}[2]$. For GLE (\ref{89-1}) with
$(p,A,\tau)=(p_{C}^{(0)}(\tau),A(\tau),\tau)$, by repeating the proof of
Theorem \ref{thm1}, there exists a monodromy data $C_{1}=C_{1}(\tau)$ such
that either
\begin{align}
&  \wp(p_{C}^{(0)}(\tau)|\tau)=\label{10}\\
&  \frac{2g_{3}(\tau)(C_{1}-\tau)^{3}-4(C_{1}\eta_{1}(\tau)-\eta_{2}%
(\tau))^{3}-g_{2}(\tau)(C_{1}\eta_{1}(\tau)-\eta_{2}(\tau))(C_{1}-\tau)^{2}%
}{(C_{1}-\tau)[12(C_{1}\eta_{1}(\tau)-\eta_{2}(\tau))^{2}-g_{2}(\tau
)(C_{1}-\tau)^{2}]}\nonumber
\end{align}
or%
\begin{equation}
\wp(p_{C}^{(0)}(\tau)|\tau)=\frac{(\frac{g_{2}(\tau)}{2}-3e_{k}(\tau
)^{2})(C_{1}\eta_{1}(\tau)-\eta_{2}(\tau))+\frac{g_{2}(\tau)}{4}e_{k}%
(\tau)(C_{1}-\tau)}{3e_{k}(\tau)(C_{1}\eta_{1}(\tau)-\eta_{2}(\tau
))+(\frac{g_{2}(\tau)}{2}-3e_{k}(\tau)^{2})(C_{1}-\tau)} \label{11}%
\end{equation}
with some $k\in \{1,2,3\}$ holds for any $\tau$ satisfying $p_{C}^{(0)}%
(\tau)\not \in E_{\tau}[2]$. Since GLE (\ref{89-1}) with $(p,A,\tau
)=(p_{C}^{(0)}(\tau),A(\tau),\tau)$ is monodromy preserving, so the monodromy
data $C_{1}$ is a constant dependent of $\tau$. By comparing the RHS of
(\ref{III-17}) and (\ref{10})-(\ref{11}) which hold for any $\tau$ satisfying
$p_{C}^{(0)}(\tau)\not \in E_{\tau}[2]$ (i.e. \textit{any }$\tau \in \mathbb{H}%
$\textit{ except a discrete set}), we easily\ conclude that (\ref{10}) and
$C_{1}=C$ hold. Then by letting $A_{0}=A(\tau_{0})$ and using $p_{0}%
=p_{C}^{(0)}(\tau_{0})$, we see that $C$ is precisely the monodromy data of
GLE (\ref{89-1}) with $(p,A,\tau)=(p_{0},A_{0},\tau_{0})$ and $(\varepsilon
_{1},\varepsilon_{2})=(1,1)$. This proves (1).

The assertion (2) can be proved similarly.
\end{proof}

The above proof highlights the effectiveness of Painlev\'{e} VI equation to solve such kind of Riemman-Hilbert problems. We will apply this idea to consider the general case $n_k\in\mathbb{Z}$ for all $k$ in a future work.

\end{document}